\documentclass[11pt]{amsart}

\usepackage{amssymb}
\usepackage{amsmath}
\usepackage{amsthm}
\usepackage{enumerate}
\usepackage{graphicx}
\usepackage{caption}
\usepackage{subcaption}
 
\usepackage{hyperref}  
 
\hypersetup{colorlinks=true, citecolor=blue}

%%%%%%%%%%%  Theorems  %%%%%%%%%%%%%%%%%

\theoremstyle{plain}
\newtheorem{thm}{Theorem}[section]
\newtheorem{prop}[thm]{Proposition}
\newtheorem{lem}[thm]{Lemma}

\numberwithin{equation}{section}

\theoremstyle{definition}

\theoremstyle{remark}

\newtheorem*{acknowledgements}{Acknowledgements}
\newtheorem*{funding}{Funding}

\theoremstyle{plain}

\newcommand{\thmref}[1]{Theorem~\ref{#1}}

\newcommand{\propref}[1]{Proposition~\ref{#1}}

\newcommand{\lemref}[1]{Lemma~\ref{#1}}

%%%%%%%%%%%% Fonts  %%%%%%%%%%%%%%%%%%%%%

\newcommand{\calR}{{\mathcal R}}

\newcommand{\X}{{\mathcal X}}

\newcommand{\T}{{\mathcal T}}

\newcommand{\RR}{{\mathbb R}}

\newcommand{\eps}{{\varepsilon}}

%%%%%%%%%%%%%%%%%%%%%%%%%%%%

\begin{document}

\title{The dimension of Thurston's spine}

\author{Maxime Fortier Bourque}
\address{D\'epartement de math\'ematiques et de statistique, Universit\'e de Montr\'eal, 2920, chemin de la Tour, Montr\'eal (QC), H3T 1J4, Canada}
\email{maxime.fortier.bourque@umontreal.ca}

\begin{abstract}
We show that for every $\eps>0$, there exists some $g\geq 2$ such that the set of closed hyperbolic surfaces of genus $g$ whose systoles fill has dimension at least $(5-\eps) g$. In particular, the dimension of this set---proposed as a spine for moduli space by Thurston---is larger than the virtual co\-homological dimension of the mapping class group.
\end{abstract}

\maketitle

\section{Introduction}

A \emph{systole} in a closed hyperbolic surface is a closed geodesic of minimal length. The word ``systole'' is also used for the common length of these geodesics. A set of closed geodesics \emph{fills} if each component of the complement of their union is contractible, in other words, if it cuts the surface into polygons. Let $\T_g$ be the Teichm\"uller space of closed hyperbolic surfaces of genus $g$ and $\X_g$ the subset of such surfaces whose systoles fill. In a 3-page preprint \cite{Thurston}, Thurston claimed to prove that $\T_g$ admits a mapping class group equivariant deformation retract into $\X_g$. The idea of the proof is to construct a continuous mapping class group invariant vector field on $\T_g$ which vanishes only on $\X_g$ and such that the systole length increases along flow lines. As pointed out in \cite{Ji}, this is not enough to guarantee the existence of a deformation retraction into $\X_g$ since one can construct a continuous vector field on a closed disk which vanishes only on the boundary, yet the disk does not deformation retract into its boundary. Another reason to be skeptical is that since the systole is a topological Morse function on $\T_g$ \cite{Akrout}, one can also define an invariant vector field that vanishes only at the critical points, but $\T_g$ does not deformation retract into this infinite discrete set. The problem in both cases is that some flow lines flow away from the zero set initially even though they eventually flow back into it elsewhere. In general, finding spines is a very delicate business \cite{SoutoPettet1,SoutoPettet2,Lacoste}.

The goal of this paper is to prove the following lower bound on the dimension of $\X_g$ in certain genera.

\begin{thm} \label{thm:dimension}
For every $\eps>0$, there exists an integer $g\geq 2$ such that $\X_g$ has dimension at least $(5-\eps)g$. 
\end{thm}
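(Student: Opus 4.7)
The plan is to construct, for each $\eps > 0$, a closed hyperbolic surface $X_g$ of some sufficiently large genus $g$ together with a filling collection $\Gamma = \{\gamma_1, \ldots, \gamma_k\}$ of simple closed geodesics on $X_g$, all of common length, each realizing the systole of $X_g$, and with $k \leq g$. Granted such a configuration, the real-analytic subvariety
\[
\calE \ = \ \{Y \in \T_g : \ell_{\gamma_1}(Y) = \cdots = \ell_{\gamma_k}(Y)\}
\]
has local dimension at $X_g$ at least $6g - 6 - (k-1) = 6g - 5 - k \geq 5g - 5$: it is cut out by the $k-1$ real-analytic equations $\ell_{\gamma_i} = \ell_{\gamma_1}$ ($i = 2, \ldots, k$), whose differentials are linearly independent in the specific example (a generic condition when $k \leq 6g-6$). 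For $g \geq 5/\eps$ this gives $\dim \X_g \geq \dim \calE \geq 5g - 5 \geq (5-\eps)g$.

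The inclusion $\calE \subset \X_g$ in a neighborhood of $X_g$ follows from two standard facts. First, by continuity of length functions, since at $X_g$ every simple closed geodesic outside $\Gamma$ is strictly longer than the common length $L$ of the $\gamma_i$ by a positive gap, this gap persists on a small neighborhood of $X_g$ in $\calE$; hence the $\gamma_i$ remain the systoles of $Y$ for every nearby $Y \in \calE$. Second, filling depends only on the isotopy classes of the curves in $\Gamma$, so the systoles of $Y$ still fill, placing $Y$ in $\X_g$.

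The main obstacle is constructing $X_g$ itself with $k \leq g$. The naive approach of lifting a filling pair of systoles from a low-genus base through a regular cover of degree $d$ produces roughly $2d \sim 2g$ lifted curves of equal length, which is too many and gives only the weaker bound $\dim \X_g \geq 4g - 3$. To achieve $k \leq g$ I would consider finer constructions, for instance: (i) surfaces assembled from a small number of right-angled hyperbolic polygons, tuned so that the number of boundary edges realizing the systole is close to $g$; (ii) covers where some downstairs short curves lift to long curves upstairs while others lift as loops, leaving only a sparse filling subfamily at the systole length; or (iii) perturbations of highly symmetric surfaces that break most systole coincidences but preserve a filling subfamily of the original systoles. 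In each case, the delicate points are controlling the full length spectrum of $X_g$ (to rule out unexpected short geodesics outside $\Gamma$) and verifying the linear independence of the differentials $d\ell_{\gamma_i}|_{X_g}$. These form the technical heart of the argument, and the gap between the bound $(5-\eps)g$ reachable this way and the conjectured sharper $(6-\eps)g$ presumably reflects that the proposed symmetric constructions cannot push $k$ all the way down to a bounded constant.
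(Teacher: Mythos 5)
Your high-level strategy is the same as the paper's---produce a surface with roughly $g$ systoles of equal length that fill, then cut out a locus of dimension about $6g-6-g$ on which they remain systoles---but the two steps you defer are precisely the content of the paper, and your placeholders for them do not work as stated. First, the construction of $X_g$: the paper builds it explicitly by gluing $2q$-gons $P(t,q)$ along a map $M$ of type $\{p,q\}$ and girth $p$, tunes $t$ so the ``blue'' and ``red'' curves have equal length, and uses the girth hypothesis to prove these are exactly the systoles and that they fill; counting gives about $(1+\tfrac{2}{p})\tfrac{q}{q-2}(g-1)$ systoles, i.e.\ roughly $g$ for $p,q$ large. Your options (i)--(iii) gesture at this but none is carried out, and controlling the rest of the length spectrum is not a routine matter one can leave implicit.

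Second, and more seriously, your dimension count is not justified. For \emph{real}-analytic equations the locus cut out by $k-1$ equations need not have codimension at most $k-1$ (e.g.\ $x^2+y^2=0$ in $\RR^2$), so you genuinely need the linear independence of the differentials $d\ell_{\gamma_i}$ (or of the differences), and calling this ``a generic condition'' is vacuous here: the surfaces in question are as non-generic as possible, being assembled from congruent polygons with all intersections of systoles at right angles. In fact, at the symmetric surface the natural deformations (twists along the blue curves) give \emph{zero} first-order change in the red lengths, since Wolpert's cosine formula yields $\cos(\pi/2)=0$ at each intersection. The paper has to first perturb the surface so that the intersection angles become a common $\theta\neq\pi/2$ (its Lemma~\ref{lem:angle}), and then prove the submersion property by an alternating sum of twists around the $q$ blue curves surrounding a vertex, which produces $\pm2\cos\theta\neq 0$ only because $q$ is odd (Proposition~\ref{prop:submersion}); the independence of the blue-length differentials comes separately, from completing the blue curves to a pants decomposition. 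This submersion step is exactly what was missing in the earlier work \cite{sublinear} and is the technical heart you acknowledge but do not supply, so the proposal has a genuine gap rather than an alternative proof.
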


In particular, there exist infinitely many genera $g$ such that the dimension of $\X_g$ is strictly larger than $4g-5$, contrary to an earlier claim by Irmer in Version 1 of \cite{Irmer}. The number $4g-5$ is important because it is equal to the virtual cohomological dimension of the mapping class group, which is theoretically the minimum possible for an equivariant spine of $\T_g$.  We had previously shown that the dimension of $\X_g$ is at least $4g-5$ when $g$ is even and conjectured that its dimension is at least $(6-\eps)g$ in certain genera $g$ for every $\eps>0$ \cite[Theorem 7.1 and Conjecture 1.2]{sublinear}.

The proof of \thmref{thm:dimension} works by finding surfaces with roughly $g$ systoles that fill and then applying the implicit function theorem to get a submanifold of codimension roughly $g$ where these curves remain systoles. These surfaces are built using a simplified variant of a construction from \cite{sublinear}. The part that was missing in \cite{sublinear} to prove \thmref{thm:dimension} was a submersion property, which we were not able to prove because of the complexity of the graphs used to glue the surfaces. This is remedied here by simplifying the structure of the graphs. The drawback is that the number of systoles gets larger, so we do not obtain the full $(6-\eps)g$ conjecture.

\section{Surfaces from maps of large girth}  \label{sec:gluing}

In this section, we give a simple construction of a hyperbolic surface $X(t,M)$ that depends on a parameter $t>0$ and a graph $M$ satisfying certain conditions such that for some $t_0$ the systoles fill. This construction is a special case of a more general one alluded to in \cite[Remark 4.3]{sublinear}. We give the full details here for completeness.

For every integer $q\geq 3$ and parameter $t>0$, there exists a unique right-angled polygon $P(t,q)$ with $2q$ sides, half of which have length $t$ and the other half some number $s(t)$, alternately. To construct $P(t,q)$, start with a quadrilateral $Q(t,q)$ with one vertex $v_0$ of angle $\pi/q$, three right angles, and one of the sides opposite to $v_0$ of length $t/2$, then apply repeated reflections in the sides adjacent to $v_0$. To see why $Q(t,q)$ exists and is unique, we start with two rays $r_1$, $r_2$ at angle $\pi/q < \pi/2$ from the point $v_0$. There is a point $w$ on the ray $r_2$ such that the geodesic $\Gamma_w$  through $w$ and orthogonal to $r_2$ is asymptotic to $r_1$. As $w$ moves away from $v_0$ along $r_2$, the distance between $\Gamma_w$ and $r_1$ is strictly increasing, continuous, and varies from $0$ to $\infty$. By the intermediate value theorem, there exists a choice of $w$ such that this distance is exactly $t/2$ and this choice is unique by strict monotonicity.

We will need the fact that the common length $s(t)$ of the other sides of $P(t,q)$ is a strictly decreasing function of $t$. One reason why this is true is because the quadrilaterals $Q(t,q)$ all have the same area, so none of them is contained in any other. This means that as one side gets further from $r_1$, the other side gets closer to $r_2$. In fact, we have $\cos(\pi/q) = \sinh(t/2)\sinh(s(t)/2)$ \cite[Equation 2.3.1(i) on p.454]{Buser}. We will say that the sides of $P(t,q)$ of length $t$ are \emph{blue} and those of length $s(t)$ are \emph{red}. 

Let $p,q\geq 3$ be integers and let $M$ be a finite orientable surface map of type $\{p,q\}$ and girth $p$. This means that $M$ is a finite graph embedded in a surface such that each vertex has degree $q$, each complementary face has $p$ sides, and we require that every embedded cycle in $M$ has length at least $p$. The existence of such an $M$ is proved in \cite[Theorem 11]{Evans}. One can further require that $M$ is flag-transitive, but this will not be needed here.

We now describe a hyperbolic surface $B(t,M)$ with boundary obtained by gluing copies of $P(t,q)$ along $M$. To each vertex $v\in M$ corresponds a copy of $P_v$ of $P(t,q)$. There is a cyclic ordering of the edges adjacent to $v$ in $M$ coming from its embedding in a surface. We thus can thus associate the blue sides of $P_v$ with the edges adjacent to $v$ in a way that respects this cyclic ordering. For every edge $e=\{v,w\}$ in $M$, we glue the polygons $P_v$ and $P_w$ along their side associated to $e$ in a way that respects the orientations on $P_v$ and $P_w$. Topologically, the resulting surface $B(t,M)$ is a thickening of the map $M$. Geometrically, it is a compact hyperbolic surface with geodesic boundary tiled by copies of $P(t,q)$. Each boundary component of $B(t,M)$ is a concatenation of exactly $p$ red sides of polygons $P(t,q)$ as it corresponds to a face of $M$. Its length is thus $s(t) p$.

Finally, the surface $X(t,M)$ is defined as the double of $B(t,M)$ across its boundary. This means that we take two copies of $B(t,M)$ with opposite orientations and glue corresponding boundary components without twist. The blue arcs in the two copies of $B(t,M)$ line up in pairs to form closed geodesics of length $2t$ in $X(t,M)$.

Another way to think of $X(t,M)$ is to first double $P(t,q)$ across its red sides to form a sphere $S(t,q)$ with $q$ blue boundary components that come with a natural cyclic ordering. Then $X(t,M)$ is obtained by gluing copies of $S(t,q)$ along $M$. The special case where $q=3$ is the familiar way of assembling a surface out of pairs of pants without twists.

Let us count the number of red and blue curves in $X(t,M)$ in terms of its genus.
\begin{lem} \label{lem:number}
If $g$ is the genus of $X(t,M)$, then its number of blue curves is $\frac{q}{q-2}(g-1)$ and its number of red curves is $\frac{2q}{p(q-2)}(g-1)$.
\end{lem}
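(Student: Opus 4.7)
The plan is to identify the blue closed geodesics of $X(t,M)$ with the edges of $M$ and the red closed geodesics with the faces of $M$, and then combine the standard degree/face relations for a map of type $\{p,q\}$ with an Euler-characteristic computation to express these counts in terms of $g$. Throughout, let $V$, $E$, $F$ denote the numbers of vertices, edges, and faces of $M$.

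First, I would verify the correspondence between curves and cells of $M$. For each edge $\{v,w\}$ of $M$, the polygons $P_v$ and $P_w$ are glued along a pair of matched blue sides, producing a single interior arc of $B(t,M)$ of length $t$ whose two endpoints lie on $\partial B(t,M)$. Under the doubling across the boundary, this arc meets its mirror image to form one closed geodesic of length $2t$ in $X(t,M)$, so the number of blue curves equals $E$. Likewise, each face of $M$ corresponds to a single boundary component of $B(t,M)$ (a concatenation of $p$ red sides), and doubling turns each boundary component into one closed geodesic, so the number of red curves equals $F$. The hypothesis that $M$ is of type $\{p,q\}$ gives the degree identities $qV=2E$ and $pF=2E$ by the usual handshake/face-degree arguments.

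To relate $E$ and $F$ to $g$, I would compute $\chi(X(t,M))$ via $\chi(B(t,M))$. Since the construction of $B(t,M)$ thickens each vertex of $M$ to a polygon $P_v$ and each edge of $M$ to a gluing strip respecting the cyclic order at $v$, the surface $B(t,M)$ is a regular neighbourhood of the embedded graph $M$ and therefore deformation retracts onto $M$; hence $\chi(B(t,M)) = V - E$. The boundary $\partial B(t,M)$ is a disjoint union of circles and has Euler characteristic $0$, so doubling yields $\chi(X(t,M)) = 2\chi(B(t,M)) = 2(V-E)$. Setting this equal to $2-2g$ gives $E-V = g-1$; substituting $V = 2E/q$ yields $E = \frac{q}{q-2}(g-1)$, and then $F = 2E/p = \frac{2q}{p(q-2)}(g-1)$, which are the two claimed counts.

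I do not expect a real obstacle in this argument; the only point that genuinely requires attention is checking that the gluing combinatorics described in the construction of $B(t,M)$ truly realizes a regular neighbourhood of $M$, so that the homotopy equivalence $B(t,M)\simeq M$ (and hence $\chi(B(t,M)) = V-E$) is valid. This reduces to the observation, already built into the construction, that the cyclic ordering of blue sides around each $P_v$ is chosen to match the cyclic ordering of edges around $v$ coming from the embedding of $M$ in a surface.
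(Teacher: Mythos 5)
Your proof is correct, and it follows the same skeleton as the paper's: identify the blue curves with the edges of $M$ and the red curves with the faces, and use the incidence relations $pF=2E=qV$. The only genuine difference is how the genus enters. The paper gets the relation $2(g-1)=(q-2)V$ in one line from Gauss--Bonnet, comparing the area $4\pi(g-1)$ of $X(t,M)$ with the area $\pi(q-2)$ of each of the $2V$ copies of $P(t,q)$. You instead argue purely topologically: $B(t,M)$ is a regular neighbourhood of the embedded graph $M$, so $\chi(B(t,M))=V-E$, and doubling gives $\chi(X(t,M))=2(V-E)$, hence $E-V=g-1$; combined with $V=2E/q$ this yields the same counts. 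Both computations are of course two ways of evaluating the Euler characteristic, and they give identical relations (your $E-V=g-1$ with $E=qV/2$ is the paper's $2(g-1)=(q-2)V$). What your route buys is independence from the hyperbolic structure---it only uses that $B(t,M)$ is a thickening of $M$ and that the boundary circles have zero Euler characteristic---at the cost of having to justify the regular-neighbourhood claim, which you correctly flag and which is indeed built into the gluing description; the paper's area argument is shorter given that the polygon areas are already on hand.
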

\begin{proof}
By the Gauss--Bonnet formula, the area of $X(t,M)$ is $4\pi(g-1)$ and the area of $P(t,q)$ is $\pi(q-2)$. Since there are $2V(M)$ polygons in $X(t,M)$ where $V(M)$ is the number of vertices in $M$, we have 
\[2(g-1)=(q-2)V(M).\]

The number of blue geodesics in $X(t,M)$ is $E(M)$, the number of edges in $M$, while the number of red geodesics is $F(M)$, the number of faces in $M$. Since each edge of $M$ belongs to two vertices and two faces, and each face has $p$ edges and each vertex belongs to $q$ edges, we have
\[
 pF(M) = 2E(M) = qV(M).
\]
We thus get
\[
E(M) = \frac{q}{2}V(M) = \frac{q}{q-2}(g-1) \text{ and } F(M) = \frac{q}{p} V(M) = \frac{2q}{p(q-2)}(g-1)
\]
as required.
\end{proof}

By taking $p$ and $q$ sufficiently large, we can make sure that there are fewer than $(1+\frac{\eps}{2})(g-1)$ blue curves and $\frac{\eps}{2}(g-1)$ red curves for any $\eps>0$.

\section{Systoles} 

We now determine which curves are the systoles in $X(M)=X(t_0,M)$ where $t_0$ is chosen in such a way that $2t_0 = s(t_0)p$, so that the red and blue curves have the same length.

The following lemma was proved in \cite[Lemma 2.3]{sublinear} in the symmetric case $t=s(t)$, but the same argument works in the asymmetric case. We repeat it for the sake of completeness.

\begin{lem}
Every arc in $P(t,q)$ between different red sides has length at least $t$ with equality only if it is a blue side and every arc between different blue sides has length at least $s(t)$ with equality only if it is a red side. 
\end{lem}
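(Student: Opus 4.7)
The plan is to reduce the claim to the hyperbolic-geometric statement that the distance in $P=P(t,q)$ between two distinct red sides $R_1,R_2$ is at least $t$, with equality realized only by the blue side between them when they are adjacent. Since red and blue sides alternate around $\partial P$, two distinct red sides share no vertex and are therefore disjoint. Because $P$ is a convex hyperbolic polygon, the infimum of hyperbolic distances between $R_1$ and $R_2$ is attained by a geodesic segment $\gamma\subset P$ meeting both sides perpendicularly at its endpoints, and this $\gamma$ is the unique common perpendicular in $\mathbb{H}^2$ of the complete geodesics $L_1,L_2$ extending $R_1,R_2$. Hence any arc $\alpha\subset P$ from $R_1$ to $R_2$ satisfies $|\alpha|\geq|\gamma|$, with equality only when $\alpha=\gamma$.

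When $R_1$ and $R_2$ are adjacent, separated by a single blue side $B$, the right-angledness of $P$ makes $B$ perpendicular to both $R_1$ and $R_2$, so $B$ is a common perpendicular of $L_1$ and $L_2$ of length $t$. Uniqueness of common perpendiculars forces $\gamma=B$, giving $|\alpha|\geq t$ with equality only for $\alpha=B$.

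The main obstacle is the non-adjacent case, where the strict inequality $|\gamma|>t$ must be established. If $R_1$ and $R_2$ sandwich a single intermediate red side $R$ on the shorter side, with blue sides $B_1,B_2$ joining $R_1$ to $R$ and $R$ to $R_2$, then the six segments $R_1', B_1, R, B_2, R_2', \gamma$ (where $R_1',R_2'$ are appropriate sub-arcs of $R_1,R_2$) bound a right-angled hexagon inside $P$. I would apply the standard right-angled hexagon identity, together with the Lambert quadrilateral relation $\sinh(t/2)\sinh(s(t)/2)=\cos(\pi/q)$ coming from $Q(t,q)$, to compute $\cosh|\gamma|$ explicitly and verify $\cosh|\gamma|>\cosh t$. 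For red sides separated by several intermediate ones, the same strategy applies to a larger right-angled polygon, with monotonicity in the number of included sides yielding $|\gamma|>t$ there as well. This hyperbolic trigonometric estimate is the technical heart of the argument.

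The analogous bound for arcs between different blue sides is obtained by the identical argument with the roles of red and blue interchanged and $t$ replaced by $s(t)$ throughout; since the Lambert relation is symmetric in $t$ and $s(t)$, nothing else in the proof changes.
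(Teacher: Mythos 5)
Your overall strategy (reduce the lemma to the distance between two red sides and compute that distance with hyperbolic trigonometry) can be made to work, but as written the proof has genuine gaps exactly at what you yourself call its technical heart. First, the inequality $\cosh|\gamma|>\cosh t$ in the one-intermediate-side case is announced rather than proved, and it is not automatic: the right-angled hexagon identity gives $\cosh|\gamma|=\sinh^2 t\,\cosh s(t)-\cosh^2 t$, and combined with $\sinh(t/2)\sinh(s(t)/2)=\cos(\pi/q)$ the desired strict inequality is equivalent to $\cos(\pi/q)>1/2$, i.e.\ to $q\geq 4$. This does hold here, since non-adjacent red sides exist only when $q\geq4$, but at $q=3$ one gets equality, so the estimate genuinely depends on $q$ and must actually be carried out. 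Second, and more seriously, the case of two or more intermediate red sides is disposed of by an asserted ``monotonicity in the number of included sides'' which is not proved and does not follow from the data you have: the larger right-angled polygon is not governed by a one-line identity in $t$ and $s(t)$ alone, and the natural induction (adding one blue--red pair at a time) needs control on where the successive perpendicular feet land, which you have not established. Two smaller points: the common perpendicular of the complete geodesics $L_1,L_2$ need not have its feet on the segments $R_1,R_2$ (harmless for the lower bound, since the distance between segments is at least the distance between lines, but it affects your equality analysis), and you implicitly assume $L_1,L_2$ are disjoint and non-asymptotic so that a common perpendicular exists; extensions of non-adjacent sides of a convex polygon can in general cross, so this too needs justification (it can be read off from the same hexagon computation, since it yields a value greater than $1$).

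For comparison, the paper's proof avoids all trigonometry by a symmetry-and-surgery argument: take a length-minimizing arc $\alpha$ between two red sides; it is perpendicular to them at its endpoints; if it is not a blue side it joins non-consecutive red sides, and then its image $\beta$ under the rotation of $P(t,q)$ by $2\pi/q$ has endpoints intertwined with those of $\alpha$ along $\partial P(t,q)$, hence meets $\alpha$; following $\alpha$ to the intersection point and then $\beta$ gives a non-geodesic arc of the same minimal length between distinct red sides, which can be shortened, a contradiction. This treats any number of intermediate sides uniformly and delivers the equality case at once. If you prefer your computational route, you must supply the hexagon inequality above and a genuine argument (an induction with controlled feet, or a proved monotonicity/convexity lemma) for red sides separated by several intermediate ones; the red/blue-swapped statement then indeed follows by the symmetry of the relation $\sinh(t/2)\sinh(s(t)/2)=\cos(\pi/q)$, as you say.
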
 
\begin{proof}
Let $\alpha$ be a geodesic arc of minimal length between two red sides. Then $\alpha$ is orthogonal to the red sides of $P(t,q)$ at its endpoints. If $\alpha$ is not a blue side, then it joins two non-consecutive red sides. This implies that the rotation of angle $2\pi/q$ of $P(t,q)$ around its center sends $\alpha$ to an arc $\beta$ that intersects it, because their endpoints are intertwined along $\partial P(t,q)$. Following $\alpha$ initially and then continuing along $\beta$ after the point of intersection produces an arc $\gamma$ of the same length as $\alpha$ and $\beta$ with endpoints in distinct red sides. Since $\gamma$ is not geodesic, we can produce a strictly shorter arc, which is a contradiction. We conclude that $\alpha$ is a blue side. The other statement follows by exchanging the words ``red'' and ``blue'' in the above argument.
\end{proof}

We then move on to the double $S(t,q)$ of $P(t,q)$ across the red sides. The proof of the following lemma is identical to analogous results in \cite{sublinear}. 

\begin{lem} \label{lem:systoles_in_block}
For every $t>0$, the systoles in $S(t,q)$ are the boundary geodesics of length $2t$. The shortest arcs between distinct boundary components of $S(t,q)$ are the red arcs of length $s(t)$. Finally, every arc from a boundary component to itself in $S(t,q)$ that cannot be homotoped into the boundary has length strictly larger than $t$.
\end{lem}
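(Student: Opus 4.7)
My plan is to deduce all three parts of the lemma from the previous lemma, using as the key geometric tool that the union $R$ of the red arcs cuts $S(t,q)$ into the two open disks $\operatorname{int}(P)$ and $\operatorname{int}(P')$, together with the isometric involution $\sigma$ of $S(t,q)$ that swaps the two copies of $P(t,q)$ while fixing $R$ pointwise. The heart of the argument is the first statement; the other two follow from it by short reflection/corner tricks.

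For the first statement, let $\gamma$ be a closed geodesic in $S(t,q)$ that is not a boundary component. Since $S(t,q)\setminus R$ is a disjoint union of two disks, $\gamma$ must meet $R$ transversally, and since each crossing swaps the copy of $P(t,q)$ containing $\gamma$, the number of crossings $n$ is even with $n\geq 2$. These crossings decompose $\gamma$ into $n$ geodesic subarcs, each lying in a single copy of $P(t,q)$ with both endpoints on $R$. Because a hyperbolic geodesic meets any given geodesic line in at most one point, the two endpoints of each subarc lie on \emph{distinct} red sides, so the previous lemma gives each subarc length at least $t$. Therefore $\operatorname{length}(\gamma)\geq nt \geq 2t$, with equality forcing every subarc to be a blue side of $P(t,q)$, which would make $\gamma$ a boundary component and contradict the hypothesis. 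So interior closed geodesics have length strictly greater than $2t$, and the $q$ boundary components are the systoles.

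For the second statement, suppose an arc $\alpha$ minimizing the distance between two distinct boundary components crossed $R$ at some point $p$. Writing $\alpha=\alpha_1\cdot\alpha_2$ and replacing $\alpha_1$ by $\sigma(\alpha_1)$ would produce an arc of the same length between the same two boundary components but with a genuine corner at $p$: the end-tangent of $\sigma(\alpha_1)$ at $p$ is the reflection across $T_pR$ of the end-tangent of $\alpha_1$, and this differs from the start-tangent of $\alpha_2$ because $\alpha$ meets $R$ transversally. Smoothing strictly shortens the arc, contradicting minimality. So $\alpha$ lies in a single copy of $P(t,q)$ and runs between two distinct blue sides, and the previous lemma gives $\operatorname{length}(\alpha)\geq s(t)$ with equality iff $\alpha$ is a red side.

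For the third statement, let $\alpha$ be an arc from $x$ to $y$ on a boundary component $\beta_i$ that cannot be homotoped into $\partial S(t,q)$; after minimizing length over its homotopy class with endpoints free to slide on $\beta_i$, we may assume $\alpha$ meets $\beta_i$ perpendicularly at both endpoints. Let $\delta$ be the shorter of the two subarcs of $\beta_i$ from $y$ to $x$, so that $\operatorname{length}(\delta)\leq t$, and set $\gamma_0:=\alpha\cdot\delta$. The hypothesis on $\alpha$ implies that $\gamma_0$ is not null-homotopic (otherwise $\alpha$ would be homotopic rel endpoints to $\delta^{-1}\subset \partial S(t,q)$), so $\gamma_0$ has a nontrivial closed geodesic representative $\gamma$ in $S(t,q)$ with $\operatorname{length}(\gamma)\geq 2t$ by the first statement. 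On the other hand, $\gamma_0$ carries corners at $x$ and $y$ where $\alpha$ (perpendicular to $\beta_i$) meets $\delta$ (tangent to $\beta_i$), so $\gamma_0$ is not itself a geodesic, giving $\operatorname{length}(\gamma) < \operatorname{length}(\gamma_0) = \operatorname{length}(\alpha)+\operatorname{length}(\delta)$. Combining yields $\operatorname{length}(\alpha) > 2t - \operatorname{length}(\delta) \geq t$. The main subtle point is the final strict inequality: it depends on knowing both that $\gamma_0$ truly represents a nontrivial free homotopy class, so that the systole bound from the first statement applies to its geodesic representative, and that the corners produce a strictly shorter geodesic.
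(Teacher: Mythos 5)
Your proof is correct and takes essentially the same route as the paper: cut a closed geodesic along the red arcs into polygon segments and apply the previous lemma for the systole bound, use the reflection-across-the-red-locus corner trick for the second claim, and concatenate with the shorter boundary subarc and invoke the first claim for the third. The small deviations (a parity count of crossings with $R$ rather than the simple connectivity of the complement of $q-1$ red arcs, and the perpendicular minimizer plus corner-smoothing strict inequality in the last part) are cosmetic and do not change the argument.
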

\begin{proof}
Let $\gamma$ be a systole in $S(t,q)$. Then $\gamma$ crosses at least two red arcs since the complement of any $(q-1)$ red arcs in $S(t,q)$ is simply connected. Any segment of $\gamma$ between two consecutive intersection points with the red arcs stays in one of the two polygons $P(t,q)$ and joins two distinct red sides, hence has length at least $t$ by the previous lemma. Thus, $\gamma$ has length at least $2t$ with equality only if it is a blue boundary geodesic.

Let $\alpha$ be an arc of minimal between distinct boundary components of $S(t,q)$. Such an arc is necessarily geodesic and orthogonal to the boundary at endpoints. If $\alpha$ is not contained in one of the two copies of $P(t,q)$, then there is a non-geodesic arc of the same length that starts along $\alpha$ and continues along its reflection across the red arcs after an intersection point, which is a contradiction. We conclude that $\alpha$ is contained in one of the two copies of $P(t,q)$, so that $\alpha$ has length at least $s(t)$ by the previous lemma, with equality only if it is a red arc.

Finally, let $\alpha$ be a non-trivial arc from a boundary component $b$ to itself of length at most $t$. The curve formed by $\alpha$ together with the shorter of the two subarcs of $b$ between its endpoints is homotopically non-trivial and of length at most $2t$. By the first paragraph of this proof, this curve must be a boundary geodesic, which is a contradiction.  
\end{proof}

Let $X(M)=X(t_0,M)$ where $t_0$ is chosen as above so that $2t_0 = p\, s(t_0)$. Then the systoles in $X(M)$ are precisely the blue and the red curves.

\begin{prop} \label{prop:systoles}
Let $p,q\geq 3$ be integers and let $M$ be a map of type $\{p,q\}$ and girth $p$. Then the systoles in $X(M)$ are the blue curves and the red curves. In particular, they fill.
\end{prop}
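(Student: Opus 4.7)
The plan is to show that every closed geodesic $\gamma$ in $X(M)$ has length at least $2t_0$, with equality precisely when $\gamma$ is a blue or red curve; the filling conclusion then follows because together the red and blue curves form the $1$-skeleton of the tiling of $X(M)$ by copies of $P(t_0,q)$, whose complement is a disjoint union of open polygon interiors.

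First I would split into cases. If $\gamma$ is contained in a single $S$-block $S_v$, then \lemref{lem:systoles_in_block} gives $\mathrm{length}(\gamma)\geq 2t_0$ with equality iff $\gamma$ is a blue boundary of $S_v$, i.e.\ a blue curve. Otherwise, decompose $\gamma$ into $k\geq 1$ arcs $\alpha_1,\ldots,\alpha_k$ in $S$-blocks with endpoints on blue curves. By \lemref{lem:systoles_in_block}, an arc between distinct blue boundaries has length $\geq s(t_0)$ (equality iff a red arc), while a \emph{self-return} arc must be essential --- else it would form a bigon with a sub-arc of its blue geodesic, contradicting the fact that two distinct geodesics on a hyperbolic surface cannot bound a bigon --- and so has length $>t_0$. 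Letting $k_2$ be the number of self-return arcs and $k_1=k-k_2$,
\[
\mathrm{length}(\gamma)\geq k_1\, s(t_0)+k_2\, t_0,
\]
strictly if $k_2\geq 1$.

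Next I would view $(\alpha_1,\ldots,\alpha_k)$ as a closed walk $W$ of length $k$ in $M$ (vertices $=$ $S$-blocks, edges $=$ blue curves), whose immediate cyclic backtrackings are exactly the $k_2$ self-return arcs. If $W$ is nontrivial in $\pi_1(M)$, the girth hypothesis makes its cyclically reduced length at least $p$, so $k\geq p$ and
\[
\mathrm{length}(\gamma)\geq p\, s(t_0)+k_2(t_0-s(t_0))\geq 2t_0,
\]
with equality forcing $k=p$, $k_2=0$, and every arc to be a red arc. In that case $W$ is cyclically reduced of length equal to the girth, hence (in a simple graph) an embedded $p$-cycle in $M$; since each red arc in an $S$-block lies between two cyclically adjacent blue boundaries, the two cycle-edges at each vertex must be cyclically adjacent in the map $M$, which characterizes face boundaries, so $\gamma$ is the corresponding red curve. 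If instead $W$ is null-homotopic, I claim $k_2\geq 2$: the cancellations reducing $W$ to the empty word form a non-crossing cyclic matching on its $k$ positions, and since girth $\geq 3$ makes $M$ simple (no loops or multi-edges), elementary combinatorics on cyclic non-crossing matchings yields at least two innermost adjacent pairs, hence $k_2\geq 2$ and $\mathrm{length}(\gamma)>2t_0$.

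The main obstacle is this combinatorial claim in the null-homotopic case, which I would prove by induction on $k$: remove an innermost matched pair to produce a shorter non-crossing cyclic matching, which by induction has two innermost pairs, at most one of which is new, so the original has at least two. The base cases $k=2$ (the walk $e,e$ gives $k_2=2$ in our cyclic counting convention) and $k=4$ (both non-crossing matchings have two innermost pairs) are checked directly. Once these length bounds and equality analyses are in place, the filling statement reduces to observing that the union of all red and blue curves coincides with the $1$-skeleton of the $P(t_0,q)$-tiling of $X(M)$.
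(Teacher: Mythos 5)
Your overall strategy is the same as the paper's: cut $\gamma$ along the blue curves, bound each arc by \lemref{lem:systoles_in_block}, and run the girth dichotomy on the combinatorial shadow (non-contractible walk $\Rightarrow$ at least $p$ crossing arcs $\Rightarrow$ length $\geq p\,s(t_0)=2t_0$; null-homotopic walk $\Rightarrow$ at least two backtracks $\Rightarrow$ two essential self-return arcs $\Rightarrow$ length $>2t_0$). In fact you supply details the paper only asserts: the bigon argument showing a self-return arc cannot be homotoped into its blue boundary is correct, and your non-crossing--matching induction proving that a null-homotopic closed walk has at least two cyclic backtracks is a valid substitute for the paper's unproved ``at least two backtracking points'' claim (one can also see it by lifting to the universal covering tree and looking at leaves of the traversed subtree).

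There is, however, one step that does not hold as stated: in the equality case of the non-contractible branch you argue that because the two cycle-edges at each vertex are cyclically adjacent in the rotation at that vertex, the cycle ``must be a face boundary.'' That combinatorial implication is false in general: Petrie-type (zigzag) closed walks in a map also have consecutive edges sharing a face at every vertex, yet do not bound faces; adjacency in the rotation characterizes face boundaries only if the turn is coherently to the same side at every vertex, which you never establish. The gap is easy to close, but geometrically rather than combinatorially: when equality holds every arc of $\gamma$ is a red seam, so $\gamma$ is contained in the union of the red curves, which are pairwise disjoint simple closed geodesics; a closed geodesic contained in such a union is one of these curves (traversed once, by the length count), hence $\gamma$ is a red curve. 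Equivalently, at each crossing point with a blue curve there is exactly one red seam on the other side emanating from that point, namely the continuation of the same red curve, so the turns are automatically coherent. This is essentially how the paper concludes, via the equality clause of \lemref{lem:systoles_in_block}. With that repair your argument is complete and matches the paper's proof of \propref{prop:systoles}.
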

\begin{proof}
Let $\gamma$ be a closed geodesic in $X(M)$ which is not a blue curve. Consider the combinatorial shadow $\sigma$ of $\gamma$ in the map $M$, a loop that keeps track of which subsurfaces $S(t_0,q)$ the curve $\gamma$ visits. This shadow is well-defined up to cyclic permutations since any eventual intersection between $\gamma$ and the blue curves is transverse. Thus if $\gamma$ intersects a blue curve, then it traverses from one copy of $S(t_0,q)$ to an adjacent one and the shadow $\sigma$ traces the corresponding edge in $M$. 

If $\sigma$ is non-contractible in $M$, then its length is at least $p$ by the hypothesis on the girth of $M$. This means that $\gamma$ contains at least $p$ segments that connect distinct boundary components in copies of $S(t_0,q)$. By \lemref{lem:systoles_in_block}, $\gamma$ has length at least $s(t_0) p$, with equality only if it is a red curve.

If $\sigma$ is contractible in $M$, then it lifts to the universal cover of $M$, which is a tree, hence $\sigma$ itself traces the contour of a finite tree. If this finite tree is a single point, then $\gamma$ is contained in a single copy of $S(t_0,q)$. However, by \lemref{lem:systoles_in_block}, every closed geodesic in the interior of $S(t_0,q)$ has length strictly larger than $2t_0$ (the length of a blue curve), so $\gamma$ is not a systole. Otherwise, $\sigma$ traces the contour of a non-trivial tree, hence it has at least two backtracking points where it traverses an edge back and forth (there is one backtrack for each leaf of the finite tree that is traced). These backtracking points correspond to two disjoint subarcs in $\gamma$ that go from one boundary component of a copy of $S(t_0,q)$ to itself. By \lemref{lem:systoles_in_block}, each of these two subarcs has length strictly larger than $t_0$, so $\gamma$ has length strictly larger than $2t_0 = s(t_0) p$ so it is not a systole.

We conclude that the shortest closed geodesics in $X(M)$ are the blue curves and the red curves. Since these curves cut the surface into polygons, they fill.
\end{proof}

\section{Deformations preserving the systoles}

Let $X=X(M)=X(M,t_0)$ be as before. We first perturb $X$ to a nearby surface $Y$ so that the systoles remain the same but they do not intersect at right angles anymore. 

\begin{lem} \label{lem:angle}
There exists a hyperbolic surface $Y$ whose systoles are in same homotopy classes as on $X$ and the angle of intersection between any two intersecting ones is a common angle $\theta < \frac{\pi}{2}$.
\end{lem}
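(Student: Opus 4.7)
The plan is to perturb $X$ by a Fenchel--Nielsen earthquake along the multicurve $R = \bigsqcup_i R_i$ of all red curves with a common weight $\tau$ on each component. Let $Y_\tau$ denote the resulting family, with $Y_0 = X$. Each red curve retains its length $p\,s(t_0) = 2t_0$ along the family, since a twist preserves the length of the twisted curve. For each blue curve $B_j$, Wolpert's cosine formula gives
\[
\left.\frac{d}{d\tau}\right|_{\tau=0}\ell_{Y_\tau}(B_j) \;=\; \sum_{p \in B_j \cap R} \cos\theta_p \;=\; 0,
\]
since every crossing on $X$ is perpendicular. Hence the equality $\ell(B_j) = \ell(R_i) = 2t_0$ is preserved to first order in $\tau$.

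The key observation is that at every crossing $p$ on $X$, the local picture is the same: both curves have length $L = 2t_0$, they meet at angle $\pi/2$, and the surrounding geometry is that of four copies of the right-angled quadrilateral $Q(t_0, q)$ meeting at a vertex. The first-order variation $\left.\frac{d\theta_p}{d\tau}\right|_{\tau=0}$ under the diagonal earthquake depends only on these common invariants, so it takes a common value $c$ at every crossing; a direct computation (or the standard twist formula for crossing angles) shows $c \neq 0$. Thus to first order all crossing angles become $\pi/2 + c\tau$, and a sign choice for $\tau$ makes this value less than $\pi/2$.

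To promote these first-order statements to an exact one, I would consider the subspace $\mathcal{V} \subset \T(X)$ of hyperbolic structures for which all blue lengths are equal, all red lengths are equal, and all crossing angles are equal. Combining the first-order computation above with a compensating rescaling of the polygon parameter $t$ to enforce $\ell(B_j) = \ell(R_i)$ exactly, the implicit function theorem presents $\mathcal{V}$ locally as a positive-dimensional submanifold through $X$ on which the common angle $\theta$ varies; a nearby $Y \in \mathcal{V}$ then satisfies $\theta(Y) < \pi/2$. Openness of the systole property (the set of closed geodesics of length at most $L$ is locally finite in $\T(X)$) guarantees that no other homotopy class becomes as short as the blue and red curves on $Y$. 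The hardest step will be verifying that $\mathcal{V}$ is indeed a smooth submanifold of positive dimension at $X$, i.e.\ that the differentials of the equal-length and equal-angle constraints are not overly degenerate; I would reduce this to the local invariance noted above, which ensures that the diagonal earthquake, after correction, survives as a nonzero tangent direction in $T_X\mathcal{V}$ along which the common angle varies.
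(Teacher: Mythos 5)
Your overall strategy (twist to break the right angles, then re-balance the lengths by adjusting $t$) is the right idea, and in fact mirrors the paper's construction with the roles of the colours exchanged: the paper first takes $t>t_0$ and then twists by $r(t)$ along the \emph{blue} curves so that the geodesics homotopic to the red curves return to length $2t$. But your argument has a genuine gap exactly where the content of the lemma lies: you need an actual surface on which \emph{all} red and blue lengths are exactly equal and \emph{all} crossing angles are exactly equal (this exact equality of angles is what the cancellation in Proposition~\ref{prop:submersion} uses), and your proof only establishes these equalities to first order. The promotion to an exact statement is delegated to an implicit function theorem applied to the locus $\mathcal{V}$ of structures with all lengths equal and all angles equal, and you yourself flag that showing $\mathcal{V}$ is a smooth positive-dimensional submanifold through $X$ is the ``hardest step''---it is not done, and there is no reason offered why the roughly $2E(M)$ equal-angle constraints together with the equal-length constraints cut out anything nondegenerate. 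Moreover, the justification you give for the angles moving at a common rate $c$---that the first-order variation of $\theta_p$ ``depends only on the local picture''---is not valid: derivatives of crossing angles under earthquakes are global quantities (they involve the change of direction of the geodesic representative of the blue curve, which depends on the whole deformation), so sameness of the local picture at $X$ does not by itself give a common $c$, nor is $c\neq 0$ verified.

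The way out is to avoid the perturbative argument altogether: because the twisted surface is still assembled from congruent right-angled pieces, the geodesics homotopic to the untwisted colour are explicit zigzags, namely the closed geodesics obtained by joining the midpoints of the alternating perpendicular segments, each piece being invariant under half-turns about those midpoints. This yields closed-form identities, e.g.\ in the paper's version
\[
\cosh\bigl(\mu/(2p)\bigr)=\cosh(r/2)\cosh\bigl(s(t)/2\bigr), \qquad \sinh\bigl(s(t)/2\bigr)=\sin(\theta)\,\sinh\bigl(\mu/(2p)\bigr),
\]
so that equal lengths can be arranged exactly by choosing $r=r(t)$, all crossing angles are equal to the same $\theta<\pi/2$ by the formula rather than by an unproved transversality claim, and Wolpert's lemma then keeps every other geodesic longer for $t$ close to $t_0$. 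If you want to keep your red-twist version, you should carry out the analogous exact midpoint computation for the blue-homotopic geodesics after twisting along the red curves (with $t<t_0$ so the lengths can be equalized), rather than relying on the first-order computation and the unverified structure of $\mathcal{V}$.
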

\begin{proof}
Let $L_0=2t_0 = s(t_0) p$ be the systole length of $X$ and let $L_1 > L_0$ be the next shortest geodesic length. If $t>t_0$, then the blue curves in $X(M,t)$ have length $2t > L_0$ and the red curves have length $s(t)p < L_0$.

Perform a left twist deformation of length $r>0$ around each blue curve in $X(M,t)$. Let us temporarily call ``green'' the geodesics in the same homotopy classes as the red curves after twisting. We claim that all the green curves have the same length and furthermore their angle of intersection with blue curves is equal to a common value $\theta$. This is because each green curve follows a zigzag of red seams of length $s(t)$ and blue arcs of length $r$. The green curve is then obtained by connecting the midpoints of the red seams and blue segments, bounding a sequence of right triangles along the way (the resulting curve is geodesic and in the right homotopy class). The length $\mu$ of the green curves thus satisfies
\[
\cosh(\mu/(2p))=\cosh(r/2)\cosh(s(t)/2)
\]
by the hyperbolic Pythagoras formula. In particular, there exists a unique $r>0$ (depending continuously on $t$) such that $\mu = 2t$, the length of the blue curves. The counter-clockwise angle $\theta$ from green to blue can be calculated from the trigonometric formula 
\[\sinh(s(t)/2)= \sin(\theta) \sinh(\mu/(2p)).\]
It is obviously strictly smaller than $\pi/2$.

Now that we no longer have any use for the red seams, we will call the green curves ``red'' again. For every $t>t_0$, we have chosen $r(t)>0$ in such a way that the blue and red curves on the surface $Y_t$ obtained by twisting $X(M,t)$ by distance $r(t)$ around each blue curve have the same length $2t$. By Wolpert's lemma \cite[Lemma 3.1]{WolpertLengthSpectra}, the length of any closed geodesic changes by a factor at most $e^{2d(X,Y_t)}$ from $X$ to $Y_t$, where $d$ is the Teichm\" uller distance. Since this distance tends to zero as $t\to t_0$ and since the length of any closed geodesic other than the red and blue ones is at least $L_1$ on $X$, we can pick $t$ close enough to $t_0$ so that $2t < (L_0+L_1)/2$ and any other closed geodesic in $Y_t$ has length at least $(L_0+L_1)/2$. Let $Y = Y_{t}$ for any such $t$.
\end{proof}

We now prove that at the point $Y$, the map that records the lengths of the red curves restricted to the submanifold $W$ of the Teichm\"uller space $\T_g$ where the blue curves remain of length $2t$ is a submersion, as long as $q$ is odd.

\begin{prop} \label{prop:submersion}
Let $\calR$ be the set of red curves in $Y$. If $q$ is odd, then the map $L : W \to \RR^\calR$ defined by $L(Z)=(\ell_\alpha(Z))_{\alpha\in \calR}$ is a submersion at the point $Y$, where $\ell_\alpha(Z)$ is the length of the geodesic $\alpha$ on $Z$.
\end{prop}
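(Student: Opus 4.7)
The plan is to exhibit a concrete family of tangent vectors in $T_Y W$ whose images under $dL$ already span $\RR^{\calR}$, namely the Fenchel--Nielsen twist vector fields $\tau_{\beta_e}$ about the blue curves. First I would check that each $\tau_{\beta_e}$ is tangent to $W$ at $Y$: the blue curves are pairwise disjoint on $X(M,t)$ (distinct edges of $M$ give disjoint blue segments in $B(t,M)$, which remain disjoint after doubling and after the twist of Lemma~\ref{lem:angle}), so Wolpert's cosine formula gives $d\ell_{\beta_{e'}}(\tau_{\beta_e}) = 0$ for all $e, e' \in \calB$.

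Next I would compute $d\ell_{\alpha_f}(\tau_{\beta_e}) = \sum_{p \in \alpha_f \cap \beta_e} \cos(\theta_p)$ by Wolpert's formula. Lemma~\ref{lem:angle} guarantees that every red--blue intersection in $Y$ occurs at the common angle $\theta$, so with coherently chosen orientations this sum equals $i(\alpha_f, \beta_e)\cos(\theta)$. Combinatorially, $\alpha_f$ lies on the boundary component of $B$ corresponding to the face $f$ and crosses $\beta_e$ once for each appearance of edge $e$ along $\partial f$; since $M$ has girth $p\geq 3$, face boundaries are simple $p$-cycles, so $i(\alpha_f,\beta_e) = \mathbf{1}[e \in \partial f]$. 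Hence the matrix of $dL$ restricted to the span of the blue twists equals $\cos(\theta) \cdot I$, where $I$ is the face--edge incidence matrix of $M$.

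The crucial step, which is where the hypothesis that $q$ is odd enters, is to show that $I$ has full row rank $|\calR|$. A relation $\sum_f c_f I_{f,e} = 0$ for every edge $e$ forces $c_{f_1} + c_{f_2} = 0$ on each edge, where $f_1, f_2$ are the two distinct faces adjacent to $e$. Fixing any vertex $v$ of $M$ and considering the cyclic sequence $f_1, \ldots, f_q$ of faces around $v$ (consecutive faces share an edge at $v$, and the $q$ faces are distinct because each $\partial f_i$ is a simple cycle), iterating $c_{f_{i+1}} = -c_{f_i}$ around the cycle gives $c_{f_1} = (-1)^q c_{f_1}$. Odd $q$ forces $c_{f_1} = 0$, and connectedness of $M$ then forces $c \equiv 0$.

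The main delicate point will be the sign bookkeeping in Wolpert's formula, since the cosines at different crossings could in principle be $\pm\cos(\theta)$ depending on orientation conventions. My plan is to orient the red curves via the boundary orientations of the faces of $M$ and the blue curves coherently with respect to the orientation of $X$, so that the symmetry of the construction makes every signed angle equal $\theta$; any residual sign ambiguities only amount to row- or column-flips of the matrix, which preserve its rank, so the computation above still yields surjectivity of $dL|_{T_Y W}$ and hence the submersion property.
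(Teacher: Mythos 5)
Your proposal is correct and takes essentially the same route as the paper: twists about the blue curves, Wolpert's cosine formula with the common angle $\theta$ from Lemma~\ref{lem:angle}, and the parity of the odd degree $q$ around a vertex; your full-row-rank/kernel argument for the face--edge incidence matrix is just the transpose of the paper's explicit alternating sums $\sum_{j}(-1)^j\tau_{\beta_j}$, which hit each basis vector $e_\alpha$ directly. Your sign worry is moot: with the convention used in the paper, the cosine in Wolpert's formula involves the counterclockwise angle between the unoriented geodesics on the oriented surface, which Lemma~\ref{lem:angle} makes equal to the single value $\theta$ at every red--blue crossing, so every nonzero entry of the Jacobian is $\cos\theta$ and no orientation bookkeeping (row or column flips) is needed.
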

\begin{proof}
We will show that for every red curve $\alpha \in \calR$, the basis vector $e_\alpha$ with a one in the entry $\alpha$ and zeros elsewhere is in the image of the differential $d_Y L$. To produce $e_\alpha$, we will use the left twist deformations $\tau_\beta$ about some blue curves $\beta$, which are tangent vectors to paths that remain in $W$. Recall that $\alpha$ corresponds to a face in the map $M$. Pick any vertex $v$ along this face. The $q$ edges adjacent to $v$ in $M$ correspond to $q$ blue curves in $Y$ that cut out a sphere with $q$ holes. Enumerate these blue curves as $\beta_1,\ldots,\beta_q$ following the cyclic order around $v$ such that $\beta_1$ and $\beta_q$ are the two curves intersecting $\alpha$. Note that adjacent faces in $M$ are distinct by the hypothesis that its girth is $p$. That is, no face is glued to itself. This is why the $q$ faces around $v$ are distinct. Let us label $\alpha_1$ to $\alpha_q$ the $q$ red curves corresponding to these $q$ faces around $v$, with $\alpha_j$ intersecting precisely $\beta_{j-1}$ and $\beta_j$, where indices are taken modulo $q$, so that $\alpha_1=\alpha$.

Consider the tangent vector $\xi = \sum_{j=1}^q (-1)^j \tau_{\beta_j}$. The cosine formula of Wolpert \cite{WolpertTwist} (see also \cite{Kerckhoff}) states that for any closed geodesic $\gamma$ transverse to a simple closed geodesic $\beta$, we have
\[
d\ell_\gamma(\tau_\beta) = \sum_{z \in \gamma\cap \beta} \cos \angle_ z(\gamma,\beta)
\]
where $\angle_ z(\gamma,\beta)$ is the counter-clockwise angle from $\gamma$ to $\beta$ at the point $z$ and the sum runs over all intersection points. Any of the blue curves $\beta_j$ intersects only the two red curves $\alpha_{j-1}$ and $\alpha_j$ (each one in a single point) and the angle from these red curves to $\beta_j$ at the intersection points is equal to some fixed $\theta \neq \frac{\pi}{2}$ by \lemref{lem:angle}.

For a red curve $\gamma$ different from $\alpha_1, \ldots, \alpha_q$, we have $d\ell_\gamma(\xi)=0$ as there is no intersection between $\gamma$ and the curves $\beta_j$. For $1<j\leq q$, we have 
\begin{align*}
d\ell_{\alpha_j}(\xi) &= (-1)^{j-1} d\ell_{\alpha_j}(\tau_{\beta_{j-1}}) +(-1)^j d\ell_{\alpha_j} (\tau_{\beta_{j}}) \\
&= (-1)^{j-1} \cos \theta +(-1)^j \cos \theta = 0,
\end{align*}
while for $j=1$ we get 
\[
d\ell_{\alpha}(\xi) = d\ell_{\alpha_1}(\xi) =  (-1)^q d\ell_{\alpha}(\tau_{\beta_q}) - d\ell_{\alpha}(\tau_{\beta_1}) = - 2\cos \theta \neq 0
\]
since $q$ is odd.

We have thus proved that a non-zero multiple of the basis vector $e_\alpha$ is in the image of $d_YL$ and hence that $d_YL$ is surjective since $\alpha \in \calR$ was arbitrary.
\end{proof}

\thmref{thm:dimension} follows easily from this proposition.

\begin{proof}[Proof of \thmref{thm:dimension}]
Let $Y$ be as above with $q$ odd. Note that $W$ is a smooth submanifold of dimension $(6- \frac{q}{q-2})(g-1)$ inside $\T_g$ since we can complete the set of blue curves into a pair of pants decomposition to define Fenchel--Nielsen coordinates, and $W$ is the submanifold obtained by fixing $\frac{q}{q-2}(g-1)$ of the coordinates constant equal to $2t$, namely, the lengths of the blue curves (the number of which was determined in \lemref{lem:number}).

 By \propref{prop:submersion}, $L: W \to \RR^{\calR}$ is a submersion at the point $Y$, where it takes the value $L(Y)=(2t,\ldots,2t)$. By the implicit function theorem, near $Y$ the inverse image $L^{-1}(2t,\ldots,2t)$ is a smooth submanifold of $W$ of codimension $|\calR| = \frac{2q}{p(q-2)}(g-1)$ (see \lemref{lem:number}). Thus, there is a cell $C$ of dimension
\[ 
\left(6- \frac{q}{q-2}- \frac{2q}{p(q-2)}\right)(g-1)
\] 
through $Y$ in Teichm\"uller space where the red and blue curves all remain of length $2t$. For any $\eps>0$, we can choose $q$ odd large enough and $p$ large enough so that the above number is at least $(5-\eps)g$.

 By Wolpert's lemma, close enough to $Y$ in $C$, the systoles will remain the red and blue curves and thus fill. This shows that $\X_g$ contains a cell of dimension at least $(5-\eps)g$.
\end{proof}

\begin{funding}
The author was partially supported by Discovery Grant RGPIN-2022-03649 from the Natural Sciences and Engineering Research Council of Canada.
\end{funding}

\begin{acknowledgements}
I thank Ingrid Irmer for posting the preprint \cite{Irmer}, which prompted me to think about this problem again.
\end{acknowledgements}

\bibliographystyle{amsalpha}
\bibliography{biblio}

\end{document}